\newtheorem{dfn}{Definition}[section]
\newtheorem{thm}[dfn]{Theorem}
\newtheorem{conj}[dfn]{Conjecture}
\newcommand\inv{^{-1}}
\newcommand\Ker{{\rm Ker}}
\newcommand\mt{{\overset\bullet\rightsquigarrow}}
\newcommand\pa{\partial}
\newcommand\qq{{\mathbb{Q}}}
\newcommand\TT{{\widehat{T}}}
\newcommand\TTone{{\widehat{T}_{\geq1}}}
\newcommand\wotimes{{\widehat{\otimes}}}
\newcommand\zz{{\mathbb{Z}}}
\begin{document}

\title{
A tensorial description of the Turaev cobracket on genus 0 compact surfaces
\thanks{
AMS subject classifications: Primary 57N05; Secondary 20F34, 32G15.
Keywords: Turaev cobracket, Bernoulli numbers.
}
}
\author{Nariya Kawazumi
}
\date{}
\maketitle

\abstract{
We give a tensorial description of the Turaev cobracket 
on any genus 0 compact surface 
by the standard group-like expansion, where 
the Bernoulli numbers appear.
}

\begin{center}
{\bf Introduction}
\end{center}

The free homotopy set of free loops on an oriented surface $S$, 
$\hat\pi = \hat\pi(S) = [S^1, S] = \pi_1(S)/(\text{conjugate})$, 
has rich structures. 
In the classical theory of Riemann surfaces, 
the algebraic intersection number of two free loops 
plays an central role. As a non-commutative generalization 
of the intersection number, 
the Goldman bracket \cite{Go} of two free loops 
appears in the Weil-Petersson symplectic geometry \cite{Wo},
the Poisson structure on the moduli space of flat bundles \cite{Go} and 
the Skein algebra of links in the $3$-manifold $S\times [0,1]$ \cite{T}.
In the case where $S$ is a compact surface with connected boundary,
Kuno and the author \cite{KK1} gave a tensorial description of 
the Goldman bracket, and described Dehn twists on the surface $S$
in terms of the Goldman Lie algebra.
These results are generalized to any compact surfaces 
with non-empty boundary in \cite{MT} \cite{KK2} \cite{KKs}.\par
On the other hand, the Turaev cobracket $\delta$ 
is related to Turaev's earlier work \cite{Tu78}, and was introduced
by Turaev \cite{T} in connection with the Skein algebra. 
It is a dual notion of the Goldman bracket, 
and measures the self-intersection of a single free loop. 
But little is known about the Turaev cobracket. As was discovered 
by Kuno and the author \cite{KK3}, 
the Turaev cobracket gives a geometric constraint of the images
of the (higher) Johnson homomorphisms. In order to 
deduce some results from this fact, we need a tensorial description 
of the Turaev cobracket. In \cite{KK3} and \cite{MT2}, the lowest 
degree term of the description was computed.
When the preprint of this paper \cite{K2} was uploaded at the arXiv 
(June 10, 2015), there was no other full results on the tensorial description. 
\par

In this paper we will give the tensorial description of the Turaev cobracket 
for any genus $0$ compact surface with respect to the standard group-like 
expansion $\theta^{\rm std}$. Unfortunately the expansion $\theta^{\rm std}$
does not reflect the topology of the surface enough, so that we cannot deduce 
topological consequences from our result. \par
The description is stated in Theorem\ref{dstd}, where the Bernoulli numbers 
appear. In this paper, following the convention in \cite{MT}, we agree that 
the function $s(z)$ and the Bernoulli numbers $B_{2m}$ are defined by
$$
\aligned
s(z)  & = \dfrac{1}{e^{-z}-1} + \dfrac{1}{z} 
= -\dfrac12 - \sum^\infty_{m=1}\dfrac{B_{2m}}{(2m)!}z^{2m-1}\\
&= -\frac12 - \frac{1}{12}z + \frac{1}{720}z^3 - \frac{1}{30240}z^5 + \cdots.
\endaligned
$$
The appearance of the Bernoulli numbers comes from 
the tensorial description of the homotopy intersection form by Massuyeau-Turaev
\cite{MT} (Theorem \ref{mt}), and a formula for the coaction operation $\mu$ 
by Fukuhara-Kawazumi-Kuno \cite{FKK} (Theorem \ref{fkk}). 
The Kashiwara-Vergne problem in the formulation by Alekseev-Torossian \cite{AT}
looks for a group-like expansion of the fundamental group of a pair of pants
which is compatible with all the boundary components and satisfies some 
equation involved with the Bernoulli numbers and the divergence cocycle.
As the author announced in \cite{K1}, a regular homotopy version of the 
Turaev cobracket on genus $0$ compact surfaces includes the divergence 
cocycle. Hence the result in this paper seems to suggest the following 
conjecture.

\begin{conj}\label{KV} The tensorial description of the Turaev cobracket 
with respect to any solution to the Kashiwara-Vergne problem is of 
simple expression. In particular, 
the description might be formal, namely, 
might equal its lowest degree term.
\end{conj}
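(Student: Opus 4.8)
The plan is to treat the tensorial description as a gauge-dependent presentation of a fixed topological operation, and then to exploit the Kashiwara--Vergne (KV) equations to show that, for a KV solution, the Bernoulli corrections visible in Theorem \ref{dstd} cancel. Since the Turaev cobracket $\delta$ is an invariant of the surface, its descriptions with respect to two group-like expansions $\theta^{\rm std}$ and $\theta^{\rm KV}$ differ by the Hopf-algebra automorphism $g = \theta^{\rm KV}\circ(\theta^{\rm std})\inv$ of the completed tensor algebra. Thus I would first record the change-of-expansion formula $\delta^{\rm KV} = g^{\otimes 2}\circ\delta^{\rm std}\circ g\inv$ (suitably interpreted on the reduced target), so that the problem reduces to understanding how conjugation by a KV element acts on the explicit expression of Theorem \ref{dstd}.

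Next I would rewrite $\delta^{\rm std}$ using its two sources of Bernoulli numbers. By the Massuyeau--Turaev description of the homotopy intersection form (Theorem \ref{mt}) and the Fukuhara--Kawazumi--Kuno formula for the coaction operation $\mu$ (Theorem \ref{fkk}), the higher-degree part of $\delta^{\rm std}$ is governed entirely by the series $s(z)$, and hence by the $B_{2m}$. Isolating this dependence, I would express $\delta^{\rm std}$ as the sum of its lowest-degree term and a correction built from $s(z)$ applied along the boundary loops.

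The crux is the interaction with the KV equations in the Alekseev--Torossian formulation \cite{AT}. The first equation forces $g$ to be compatible with all boundary components, which guarantees that conjugation by $g$ respects the surface structure appearing in the description; the second equation pins down the divergence of $g$ in terms of a Bernoulli generating function that matches $s(z)$. Combining this with the fact, announced in \cite{K1}, that the regular-homotopy version of $\delta$ carries the divergence cocycle, I expect the $s(z)$-correction produced by $\mu$ and by the intersection form to telescope precisely against the divergence term contributed by $g$. After this cancellation only the lowest-degree term computed in \cite{KK3} \cite{MT2} should survive, which is exactly the asserted formality.

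The main obstacle is making this cancellation exact. Two independent occurrences of the Bernoulli numbers must be matched on the nose, and one must bridge the gap between the regular-homotopy cobracket, where the divergence cocycle literally appears, and the genuine Turaev cobracket, whose difference is a framing anomaly that has to be controlled. A further difficulty is that the KV problem of \cite{AT} is stated for the pair of pants, so for a general genus $0$ surface with several boundary components one needs a gluing argument, together with a multi-boundary refinement of the KV solution, to propagate the cancellation. The hedged formulation of the conjecture reflects exactly this: a complete cancellation is plausible but not guaranteed a priori, and even the weaker claim of a ``simple expression'' would require identifying which residual terms, if any, are forced to remain.
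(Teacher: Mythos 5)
The statement you are addressing is a \emph{conjecture}: the paper offers no proof of it. The author only remarks in the introduction that the conjecture was subsequently established in \cite{AKKN} (and, for genus $0$, independently by Massuyeau \cite{M2}), both of which are separate works. So there is no internal argument to compare yours against, and your text should be judged on its own as a proposed proof.

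As such, it is a reasonable research outline but not a proof; the decisive step is asserted rather than established. Your reduction to the conjugation formula $\delta^{\theta'} = (g\wotimes g)\circ\delta^{\theta}\circ g\inv$ for $g = \theta'\circ\theta\inv$ is correct and is the right starting point, and you correctly identify the two sources of Bernoulli numbers (Theorems \ref{mt} and \ref{fkk}). But the claim that the $s(z)$-corrections ``telescope precisely against the divergence term contributed by $g$'' is exactly the content that must be proved, and nothing in your sketch supplies a mechanism for it: conjugation by a filtered Hopf automorphism of $\TT$ does not by itself produce divergence-cocycle terms, and the KV equations constrain the divergence of a tangential automorphism of a free Lie algebra, an object you have not connected to $g$ by any formula. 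You also name, but do not resolve, two genuine obstructions: the framing discrepancy between the regular-homotopy cobracket (where the divergence cocycle actually appears, per \cite{K1}) and the Turaev cobracket itself; and the passage from the pair of pants, where the KV problem of \cite{AT} lives, to $\Sigma_{0,n+1}$ for general $n$. Closing these gaps is precisely what required the new framework of \cite{AKKN} (a noncommutative-Poisson/double-bracket reformulation in which KV solutions are reinterpreted as special expansions), so the conjecture should not be regarded as provable by the cancellation heuristic alone.
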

It is our working hypothesis for studying the higher Johnson homomorphisms
that there is a symplectic expansion for a compact surface 
with connected boundary whose description of the Turaev cobracket 
equals the lowest degree term, i.e., Schedler's cobracket \cite{Sch}.
In fact, Kuno \cite{Ku2} 
already found such an expansion for the surface of genus $1$ 
with connected boundary up to degree $10$ by a computer calculation.
If Conjecture \ref{KV} would be true, our hypothesis should be a positive genus 
analogue of the Kashiwara-Vergne problem. \par
\bigskip
After the preprint of this paper was uploaded,
Alekseev, Kuno, Naef and the author \cite{AKKN}
obtained a formal description of the Turaev cobracket 
by regarding solutions of the Kashiwara-Vergne 
problem as special expansions for genus $0$ compact surfaces.
This means that Conjecture \ref{KV} is true.
Independently from our results, 
Massuyeau \cite{M2} obtained a formal description 
of the Turaev cobracket for genus $0$ compact surfaces
by the Kontsevich integral.
\par
Theorem \ref{mt} in this paper is a modification of 
a theorem of Massuyeau and Turaev \cite{MT}. It says that 
the value of a group-like expansion at the boundary loop 
of a surface with connected boundary completely determines the 
tensorial description of the homotopy intersection form
by the expansion. As is showed by Naef \cite{N}, 
this fact can be generalized in the light of a non-commutative 
Poisson geometry, which is one of the foundations of the work
\cite{AKKN}.
\par
\bigskip
The author thanks 
Anton Alekseev,  
Yusuke Kuno, Florian Naef and Shunsuke Tsuji for 
helpful discussions.
The first draft of this paper was written during my stay at IRMA, Strasbourg,
on the occasion of the JSPS-CNRS joint project on Teichm\"uller spaces 
and surface mapping class groups. 
He would like to express his gratitude to IRMA for kind hospitality.  
He is partially supported by the Grant-in-Aid for Scientific Research (S) (No.24224002), (B) (No.24340010) and (B) (No.15H03617) from the Japan Society for Promotion of Sciences. 


\section{Statement of the Result}\label{SR}

Let $S$ be a compact connected oriented surface with non-empty boundary.
It is classified by its genus and the number of its boundary components, 
so that we may denote the surface $S$ by the symbol 
$\Sigma_{g, n+1}$ for some $g, n \geq 0$. 
Here the genus of $S$ is $g$, and the number of the boundary components is $n+1$. 
The fundamental group of the surface $S$ is free of rank $2g+n$. 
In general, for a free group $\pi$ of finite rank, we have the notion of 
group-like expansion. See \cite{M}. In order to recall the definition of a group-like 
expansion, we need to prepare some tensor algebra.
Let $H$ be the first rational homology group of $\pi$, 
i.e., $H := (\pi/[\pi,\pi])\otimes_\zz\qq$.
We denote $[\gamma] := (\gamma\bmod[\pi,\pi])\otimes 1 \in H$ 
for any $\gamma \in \pi$. The completed tensor algebra 
$\TT = \TT(H) := \prod^\infty_{m=0}H^{\otimes m}$ is endowed with the topology
by the decreasing filtration $\TT_{\geq p} := \prod_{m\geq p}H^{\otimes m}$, 
$p \geq 1$, and has the strucuture of a complete Hopf algebra 
with an augmentation $\varepsilon: \TT \to \qq$, 
a coproduct $\Delta: \TT \to \TT\wotimes\TT$ and 
an antipode $\iota: \TT \to \TT$.
They are defined to be the unique continuous algebra (anti)-homomorphisms 
satisfying $\varepsilon(X) = 0$, $\Delta(X) = X\wotimes 1 + 1\wotimes X$
and $\iota(X) = -X$ for any $X \in H$, respectively. The group ring $\qq\pi$ is also 
a Hopf algebra. The augmentation $\varepsilon: \qq\pi \to \qq$, 
the coproduct $\qq\pi \to \qq\pi\otimes\qq\pi $ and 
the antipode $\iota: \qq\pi  \to \qq\pi$ are the 
unique algebra (anti)-homomorphisms 
satisfying $\varepsilon(\gamma) = 1$, $\Delta(\gamma) = \gamma\otimes\gamma$
and $\iota(\gamma) = \gamma\inv$ for any $\gamma \in \pi$, respectively. 
The completion of $\qq\pi$ with respect to the augmentation ideal $I\pi := \Ker \,
\varepsilon$, 
$
\widehat{\qq\pi} := \varprojlim_{p\to\infty}\qq\pi/(I\pi)^p,
$
is a complete Hopf algebra in a natural way. 
\begin{dfn}[See \cite{MT}] The map $\theta: \pi \to \TT$ is a group-like expansion
if the following three conditions hold:
\begin{enumerate}
\item The map $\theta$ is multiplicative, i.e., we have $\theta(\gamma_1\gamma_2) 
= \theta(\gamma_1)\theta(\gamma_2)$ 
for any $\gamma_1$ and $\gamma_2 \in \pi$.
\item For any $\gamma \in \pi$, $\theta(\gamma) \equiv 1 + [\gamma] 
\pmod{\TT_{\geq 2}}$.
\item For any $\gamma \in \pi$, $\theta(\gamma) \in \TT$ is group-like, i.e., 
$\Delta\theta(\gamma) =\theta(\gamma)\wotimes\theta(\gamma)
\in \TT\wotimes\TT$.
\end{enumerate}
\end{dfn}
The linear extension of any group-like expansion induces an isomorphism 
of complete Hopf algebras $\theta: \widehat{\qq\pi}\overset\cong\longrightarrow 
\TT$, $\sum a_\gamma \gamma \mapsto \sum a_\gamma \theta(\gamma)$. 
\par
The group-like expansion we study in this paper is 
defined as follows. Let $S$ be the genus $0$ compact surface $\Sigma_{0, n+1}$ for some $n \geq 0$. Number the boundary components 
as $\pa S = \coprod^n_{k=0}\pa_kS$, and choose a basepoint $* \in \pa_0S$.
The standard generators $\gamma_k \in \pi_1(S, *)$, $1 \leq k \leq n$, 
are given such that each $\gamma_k$ is a simple loop going 
around the $k$-th boundary $\pa_kS$ in the positive direction, 
and the product $\gamma_1\gamma_2\cdots
\gamma_n \in \pi_1(S, *)$ is homotopic to a simple loop 
around the $0$-th boundary $\pa_0S$ in the negative direction. 
Here we read the product $\gamma_1\gamma_2\cdots
\gamma_n$ as a loop going along first $\gamma_1$, next $\gamma_2$,
and finally $\gamma_n$. 
Here we remark that $\epsilon(\overset\cdot\gamma_k(0), 
\overset\cdot\gamma_k(1)) = +1$. 
The fundamental group $\pi_1(S, *)$ is 
a free group of rank $n$ with free generators $\gamma_k$, $1 \leq k \leq n$. 
We denote by $x_k := [\gamma_k] \in H = 
H_1(S; \qq)$, $1\leq k \leq n$, the homology class of $\gamma_k$.
Equivalently $x_k$ is the homology class of the $k$-th boundary $\pa_kS$, 
so that we define $x_0 := [\pa_0S] = - [\gamma_1\gamma_2\cdots
\gamma_n] = - \sum^n_{k=1}x_k \in H = H_1(S; \qq)$. 
Then we can consider the exponential $e^{x_k} = \exp(x_k) 
= \sum^\infty_{m=0} \frac{1}{m!}{x_k}^m \in \TT = \TT(H_1(S; \qq))$. 
We define {\it the standard group-like expansion} $\theta^{\rm std}: \pi = \pi_1(S, *) 
\to \TT = \TT(H_1(S; \qq))$ as the unique group-expansion 
satisfying $\theta^{\rm std}(\gamma_k) = e^{x_k}$, $1 \leq \forall k \leq n$.
Here we require these conditions only for $k \geq 1$, not for $k=0$.
The reason why one can compute the tensorial description of the Turaev cobracket
with respect to the expansion $\theta^{\rm std}$ is that 
we can apply Theorem \ref{fkk} to $x_k = \theta^{\rm std}(\log(\gamma_k))$. \par
Let $\delta: \zz\hat\pi' \to \zz\hat\pi' \otimes \zz\hat\pi'$ be the 
Turaev cobracket \cite{T}. Here $\zz\hat\pi' := \zz\hat\pi/\zz\mathbf{1}$ 
is the quotient of the $\zz$-free module over the set $\hat\pi$, $\zz\hat\pi$, 
by the linear span of the constant loop $\mathbf{1} \in \hat\pi$. 
We denote by $\vert\,\,\vert': \zz\pi_1(S, p) \to 
\zz\hat\pi \to \zz\pi/\zz\mathbf{1}
= \zz\hat\pi'$ the quotient map for any $p \in S$. 
The definition of the Turaev cobracket 
will be stated in \S\ref{prel}. The Goldman bracket and the Turaev cobracket
make $\zz\hat\pi'$ a Lie bialgebra in the sense of Drinfel'd \cite{T}, 
so that we call it the Goldman-Turaev Lie bialgebra of the surface $S$. 
The bialgebra has a completion with respect to the augmentation ideal
$I \pi$, $\widehat{\qq\hat\pi} := \varprojlim_{p\to\infty}\qq\hat\pi'/
\vert (I\pi)^p\vert'$. 
We have a natural continuous extension $\vert\,\,\vert': \widehat{\qq
\pi} \to \widehat{\qq\hat\pi}$. 
The Goldman bracket and the Turaev cobracket 
extend continuously to $\widehat{\qq\hat\pi}$ \cite{KK2}\cite{KK3}. 
In particular, the Turaev cobracket is a continuous map $\delta: \widehat{\qq\hat\pi}
\to \widehat{\qq\hat\pi}\wotimes \widehat{\qq\hat\pi}$. \par
On the tensor algebra side, we denote by $N(\TT)$ the quotient of $\TT$
by the closure of $\qq 1 + [\TT, \TT]$, where $[\TT, \TT]$ is the  
$\qq$-linear subspace of $\TT$ generated by the set 
$\{uv-vu; \,\, u, v \in \TT\}$. The vector space $N(\TT)$ is naturally 
isomorphic to the space of cyclic invariants $\prod^\infty_{m=1}
(H^{\otimes m})^{\zz/m}$, where the cyclic group $\zz/m$ acts on 
the space $H^{\otimes m}$ by cyclic permutation. 
We denote by $\vert\,\,\vert': \TT \to N(\TT)$ the quotient map. 
Any group-like expansion $\theta$ induces a topological isomorphism
$\theta: \widehat{\qq\hat\pi}\overset\cong\rightarrow N(\TT)$ \cite{KK2}.
Thus we have the tensorial description $\delta^\theta$ of the Turaev cobracket
with respect to $\theta$ defined by the diagram
$$\begin{CD}
\widehat{\qq\hat\pi} @>{\delta}>> \widehat{\qq\hat\pi}\wotimes \widehat{\qq\hat\pi}\\
@V{\theta}VV @V{\theta\wotimes\theta}VV\\
N(\TT) @>{\delta^\theta}>> N(\TT)\wotimes N(\TT).
\end{CD}
$$
Now we can formulate our result.
\begin{thm}\label{dstd} Let $\delta^{\rm std} = \delta^{\theta^{\rm std}}$ 
be the tensorial description of the Turaev cobracket with respect to 
the standard group-like expansion $\theta^{\rm std}$ for the surface 
$S = \Sigma_{0, n+1}$. Then, for any $m \geq 1$ and any
$k_1, k_2, \dots, k_m \in \{1, 2, \dots, n\}$,  we have
$$
\aligned
\delta^{\rm std}(x_{k_1}x_{k_2}\cdots& x_{k_m})\\
=  {\rm alt}(\vert\,\,\vert'\otimes\vert\,\,\vert')\Big(&
\sum_{1\leq i < j \leq m}K_{k_ik_j}
(x_{k_{j+1}}\cdots x_{k_m}x_{k_1}\cdots x_{k_{i-1}}
\wotimes x_{k_{i+1}}\cdots x_{k_{j-1}})
\\
& -\frac12\sum^m_{i=1}x_{k_1}\cdots x_{k_{i-1}}x_{k_{i+1}}\cdots x_{k_m}
\wotimes {x_{k_i}}\\
& + \sum^m_{i=1}\sum^\infty_{q=1}\frac{B_{2q}}{(2q)!}\sum^{2q-1}_{p=0}
(-1)^p\begin{pmatrix} 2q\\ p\end{pmatrix}
x_{k_1}\cdots x_{k_{i-1}}{x_{k_i}}^px_{k_{i+1}}\cdots x_{k_m}\wotimes
{x_{k_i}}^{2q-p}\Big).
\endaligned
$$
Here, for $1\leq k, l\leq n$, we denote
$$
K_{k,l} := (1\wotimes \iota)\Delta\left(\epsilon_{kl}x_{k}x_{l} - 
\delta_{kl}\frac{{x_{k}}^2}{e^{-x_{k}}-1}\right) \in \TT\wotimes\TT,
$$
where $\delta_{kl}$ is the Kronecker delta, $\epsilon_{kl}$ is defined by
$$
\epsilon_{kl} := \begin{cases}
1, & \text{if $k > l$,}\\
0, & \text{if $k \leq l$,}
\end{cases}
$$
and ${\rm alt}: N(\TT)\wotimes N(\TT) \to N(\TT)\wotimes N(\TT)$, 
$u\wotimes v \mapsto u\wotimes v-v\wotimes u$, is the alternating operator. 
\end{thm}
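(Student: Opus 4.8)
The plan is to compute $\delta^{\rm std}$ from the combinatorial definition of the Turaev cobracket via the defining diagram $\delta^{\rm std} = (\theta^{\rm std}\wotimes\theta^{\rm std})\circ\delta\circ(\theta^{\rm std})\inv$, feeding in the two explicit inputs announced in the Introduction: the tensorial value of the homotopy intersection form under $\theta^{\rm std}$ (Theorem \ref{mt}) and the formula for the coaction $\mu$ (Theorem \ref{fkk}). The first point is to realize the homogeneous generator $x_{k_1}\cdots x_{k_m}\in N(\TT)$ as $\theta^{\rm std}$ applied to the class $\vert\log\gamma_{k_1}\cdots\log\gamma_{k_m}\vert'$, using $x_{k_j} = \theta^{\rm std}(\log\gamma_{k_j})$ as highlighted in \S\ref{SR}; by continuity and linearity of $\delta$ it then suffices to evaluate $\delta$ on this class. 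Reducing $\delta$ to the based-loop operations set up in \S\ref{prel} lets me split the resulting self-intersections into two geometrically distinct families, indexed respectively by ordered pairs of positions $i<j$ and by single positions $i$.

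For the first family, I would apply Theorem \ref{mt}: the contribution of the pair $(i,j)$ is governed by the homotopy intersection form evaluated on $\theta^{\rm std}(\gamma_{k_i}) = e^{x_{k_i}}$ and $\theta^{\rm std}(\gamma_{k_j}) = e^{x_{k_j}}$. When $k_i \neq k_j$ this produces the off-diagonal term $\epsilon_{k_ik_j}x_{k_i}x_{k_j}$, while the case $k_i = k_j$ (two parallel strands around the same boundary $\pa_{k_i}S$) produces the diagonal term $-\frac{x_{k_i}^2}{e^{-x_{k_i}}-1}$, which carries the Bernoulli numbers through the function $s(z)$. Assembling these via $(1\wotimes\iota)\Delta$ and splitting the remaining word at the two marked positions then yields exactly the rotated tensor $x_{k_{j+1}}\cdots x_{k_m}x_{k_1}\cdots x_{k_{i-1}}\wotimes x_{k_{i+1}}\cdots x_{k_{j-1}}$ with coefficient $K_{k_ik_j}$, the cyclic shift reflecting the two arcs into which a free loop is cut at a self-intersection.

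For the second family I would apply Theorem \ref{fkk} to the single exponential $e^{x_{k_i}} = \theta^{\rm std}(\gamma_{k_i})$, using $x_{k_i} = \theta^{\rm std}(\log\gamma_{k_i})$. The coaction $\mu$ splits off a sub-loop that is itself a power of $x_{k_i}$ and leaves the complementary word; after expanding the exponential and collecting by degree, the generating function appearing in $s(z)$ produces precisely the Bernoulli sum $\sum_q \frac{B_{2q}}{(2q)!}\sum_p(-1)^p\binom{2q}{p}\,x_{k_1}\cdots x_{k_{i-1}}x_{k_i}^p x_{k_{i+1}}\cdots x_{k_m}\wotimes x_{k_i}^{2q-p}$, together with the correction $-\frac12\,x_{k_1}\cdots x_{k_{i-1}}x_{k_{i+1}}\cdots x_{k_m}\wotimes x_{k_i}$ coming from the leading term $-\frac12$ of $s(z)$. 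Finally I would apply the projections $\vert\,\,\vert'$ to both factors, antisymmetrize with ${\rm alt}$, and sum over single positions $i$ and over pairs $i<j$.

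The main obstacle I anticipate is the bookkeeping at the junctions and on the diagonal: ensuring that the self-intersection formula of Theorem \ref{fkk} and the inter-strand formula of Theorem \ref{mt} are glued without double-counting the crossings located exactly at the breakpoints between adjacent generators, and that the case $k_i=k_j$ is attributed consistently to the first family rather than the second. I would control this by working throughout in the cyclic-invariant quotient $N(\TT)$ and exploiting the antisymmetry of ${\rm alt}$, which kills the spurious symmetric contributions, and by checking the lowest-degree part against the known computation of Schedler's cobracket in \cite{KK3}\cite{MT2} as a consistency test.
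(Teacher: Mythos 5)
Your overall architecture matches the paper's: decompose $\mu$ of the word $\log\gamma_{k_1}\cdots\log\gamma_{k_m}$ via the product formula $(*)$ into single-position terms (handled by Theorem \ref{fkk}) and pair terms (handled by the homotopy intersection form), then pass to $\delta$ by (\ref{20}) and antisymmetrize. The single-position family and the final assembly are treated correctly.

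The genuine gap is in the pair family. You write that Theorem \ref{mt} ``produces'' the off-diagonal term $\epsilon_{k_ik_j}x_{k_i}x_{k_j}$ and the diagonal term $-{x_{k_i}}^2/(e^{-x_{k_i}}-1)$, but this is exactly the statement that has to be proved, and it does not follow by merely applying Theorem \ref{mt}: that theorem is stated for a surface $\Sigma_{g,1}$ with \emph{connected} boundary, where the operation $\mt$ is defined by contracting with the nondegenerate intersection pairing and $\Omega=\log\theta(\zeta)$ is the logarithm of a single boundary loop. For $S=\Sigma_{0,n+1}$ the boundary is disconnected and no such pairing exists on $H_1(S;\qq)$. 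The paper's actual route is to glue $n$ copies of $\Sigma_{1,1}$ onto $S$ to obtain $\hat S\cong\Sigma_{n,1}$, transport $\theta^{\rm std}$ to a compatible expansion $\hat\theta$ with $\log\hat\theta(\zeta)=\imath(\Xi)$, where $\Xi=x_1*x_2*\cdots*x_n$ is a Baker--Campbell--Hausdorff product, and then evaluate $-\Xi\inv+x_0s(\Xi)x_0$ with respect to the induced operation $\mt$ on $\TTone$. That evaluation (Theorem \ref{Y}) is the technical heart of the proof: it uses the identity $Y\inv=-1+e^{-\Xi}$, the involution $Q(x_k)=-x_{n-k}$ to pin down the one-variable part of $Y$, and a reduction modulo the ideal of mixed monomials. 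None of this appears in your proposal; the claim that the case $k_i=k_j$ yields $-{x_{k_i}}^2/(e^{-x_{k_i}}-1)$ is precisely the output of that computation, not an input available from Theorem \ref{mt}. As written, your argument assumes the formula (\ref{33}) for $\kappa^{\rm std}(x_k,x_l)=-K_{k,l}$ rather than deriving it.
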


\section{Preliminaries}\label{prel}

Let $S$ be a compact connected oriented surface with non-empty boundary.
Choose a basepoint $* \in \pa S$, and denote $\pi := \pi_1(S, *)$.
We begin by recalling the coaction $\mu: \zz\pi \to \zz\pi \otimes
\zz\hat\pi'$, which is introduced in \cite{KK3} inspired by a construction
of Turaev \cite{Tu78}. The alternating part of $\mu$ is just the Turaev 
cobracket $\delta$, but $\mu$ is of multiplicative nature as stated below.
Choose another point $*^+ \in \pa S$ near $*$ in the positive direction. 
For any $\gamma \in \pi$ we regard it as a path from $*$ to $*^+$, and 
choose a representative of $\gamma$ in general position. 
By abuse of notation, we also denote by $\gamma$ 
the representative. Then the curve $\gamma$ is an immersion, and 
its singularities are at worst transverse double points. 
For each double point $p$ of $\gamma$ we have a unique pair 
$0 < t^p_1 < t^p_2 < 1$ of parameters 
such that $\gamma(t^p_1) = \gamma(t^p_2) 
= p$. Then $\mu(\gamma) \in \zz\pi\otimes\zz\pi'$ is defined by
$$
\mu(\gamma) := -\sum_p \varepsilon(\overset\cdot\gamma(t^p_1), 
\overset\cdot\gamma(t^p_2))(\gamma_{0t^p_1}\gamma_{t^p_21})
\otimes \vert \gamma_{t^p_1t^p_2}\vert',
$$
where the sum runs over the set of self-intersection points of $\gamma$, 
$\varepsilon(\overset\cdot\gamma(t^p_1), 
\overset\cdot\gamma(t^p_2)) \in \{\pm1\}$ is the local intersection number
with respect to the orientation of $S$, and $\gamma_{s_1s_2}$ is the 
restriction of $\gamma$ to the interval $[s_1, s_2]\subset [0,1]$
for any $0 \leq s_1 < s_2 \leq 1$.  
The operation $\mu$ is well-defined, i.e., independent of the choice of 
a representative \cite{KK3}. The Turaev cobracket 
$\delta: \zz\hat\pi' \to \zz\hat\pi'\otimes \zz\hat\pi'$ \cite{T}
can be defined to be the alternating part of $\mu$
\begin{equation}
\delta\circ \vert\,\,\vert' = {\rm alt}\circ (1\otimes \vert\,\,\vert')\circ\mu:
\zz\pi \to \zz\hat\pi'\otimes \zz\hat\pi'.
\label{20}
\end{equation}
Here ${\rm alt}: \zz\hat\pi'\otimes \zz\hat\pi' \to \zz\hat\pi'\otimes \zz\hat\pi'$
is the alternating operator as above. The map $\mu$ extends continuously to 
the map
$\mu: \widehat{\qq\pi} \to \widehat{\qq\pi}\wotimes \widehat{\qq\hat\pi}.$
For example, the extension $\mu$ is computed as follows.
\begin{thm}[\cite{FKK}]
\label{fkk}
If $\gamma \in \pi_1(S, *)$ is represented by a simple loop 
with $\varepsilon(\overset\cdot{\gamma}(0), 
\overset\cdot{\gamma}(1)) = +1$, then we have 
$$
\mu(\log \gamma) = \frac{1}{2}1\wotimes\vert\log \gamma\vert' 
+ \sum^\infty_{m=1}\frac{B_{2m}}{(2m)!}\sum^{2m-1}_{p=0}\begin{pmatrix}
2m\\ p\end{pmatrix}(-1)^p(\log \gamma)^p\wotimes 
\vert(\log \gamma)^{2m-p}\vert'.
$$
\end{thm}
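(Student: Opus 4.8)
My plan is to pass through the integral powers $\gamma^N$. Since $\gamma$ is group-like, $\gamma^N=\exp(N\log\gamma)$, so $\log\gamma$ is the coefficient of $N^1$ when $\gamma^N$ is expanded in powers of $N$; as $\mu$ is $\qq$-linear and continuous, $\mu(\log\gamma)$ is correspondingly the coefficient of $N^1$ in $\mu(\gamma^N)$, provided the latter is first written in closed form as a function of $N$. Working in a fixed filtration degree of $\TT$ reduces everything to finite, polynomial-in-$N$ data interpolating the geometric values at positive integers, so this extraction is legitimate. The Bernoulli numbers will appear only at the very end, through the Faulhaber-type summation implicit in $\frac{z}{e^z-1}=\sum_{n\ge0}\frac{B_n}{n!}z^n$.

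First I would compute $\mu(\gamma^N)$ geometrically. I would represent $\gamma^N$ by $N$ pairwise disjoint parallel copies of the embedded loop $\gamma$ in an annular neighborhood, joined into a single immersed path from $*$ to $*^+$. Simplicity of $\gamma$ forces the parallel copies to be disjoint, so the double points of this representative are only the following: the $N-1$ transverse crossings of the radial opening/closing strand with the $N-1$ inner copies, together with exactly one further double point near the basepoint created by the framing hypothesis $\varepsilon(\dot\gamma(0),\dot\gamma(1))=+1$. Reading off the data at the crossing whose interior loop winds $b$ times, the outer arc $\gamma_{0t_1}\gamma_{t_21}$ represents $\gamma^{N-b}$ and the interior loop $\gamma_{t_1t_2}$ represents $\gamma^b$; the spiral crossings give $b=1,\dots,N-1$ and the framing crossing gives $b=N$, whose outer arc is trivial. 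Tracking the local signs (all governed by the $+1$ framing) I expect to arrive at $\mu(\gamma^N)=\sum_{b=1}^N\gamma^{N-b}\wotimes\vert\gamma^b\vert'$. As a consistency check, the $b=N$ term $1\wotimes\vert\gamma^N\vert'$ has trivial outer factor, hence is annihilated by $\delta=\mathrm{alt}\circ(1\wotimes\vert\,\,\vert')\circ\mu$, in agreement with the vanishing of $\delta$ on a simple loop.

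Next I would run the algebra. Put $t:=\log\gamma$ and substitute $\gamma^k=e^{kt}$. Expanding both tensor factors, $\mu(\gamma^N)=\sum_{r\ge0,\,s\ge1}\frac{1}{r!\,s!}\big(\sum_{b=1}^N(N-b)^r b^s\big)\,t^r\wotimes\vert t^s\vert'$, the $s=0$ part dropping out because $\vert1\vert'=0$. These inner sums are generated by $\sum_{b=1}^N e^{(N-b)x}e^{by}=e^{Nx}\frac{e^{(N+1)u}-1}{e^u-1}-e^{Nx}$ with $u=y-x$, whose coefficient of $N^1$ is exactly $\frac{u e^u}{e^u-1}$. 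Substituting $x\mapsto t\wotimes1$ and $y\mapsto1\wotimes t$, so that $u\mapsto z:=1\wotimes t-t\wotimes1$, applying $\mathrm{id}\wotimes\vert\,\,\vert'$ to the result evaluated on $1\wotimes1$, and using $\frac{ze^z}{e^z-1}=1+\frac z2+\sum_{m\ge1}\frac{B_{2m}}{(2m)!}z^{2m}$ together with $z^{2m}(1\wotimes1)=\sum_{p=0}^{2m}\binom{2m}{p}(-1)^p t^p\wotimes t^{2m-p}$, the constant term dies, the term $\frac z2$ yields $\frac12\,1\wotimes\vert t\vert'$, and the remaining series yields exactly the stated double sum. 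This produces the claimed formula for $\mu(\log\gamma)$.

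The main obstacle is the geometric step. The honest inventory of the double points of the opened-up spiral — in particular isolating the single basepoint double point dictated by the $+1$ framing, and pinning down each local intersection sign as well as the inner and outer winding numbers — is where errors are most likely to creep in. Once $\mu(\gamma^N)=\sum_{b=1}^N\gamma^{N-b}\wotimes\vert\gamma^b\vert'$ is secured, the passage to $\log\gamma$ and the emergence of the Bernoulli numbers are a routine generating-function computation.
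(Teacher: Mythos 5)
First, a point of comparison: this paper does not prove Theorem \ref{fkk} at all --- it is imported from \cite{FKK} as known input --- so there is no internal proof to measure you against; the right benchmark is the cited source. Your proposal essentially reconstructs that argument. In \cite{FKK}, too, the geometric heart is the computation of $\mu(\gamma^N)$ from an opened spiral of $N$ parallel copies of the simple loop, and the Bernoulli numbers then appear purely algebraically; the difference is organizational: \cite{FKK} passes from $\mu(\gamma^N)$ to $\mu(\log\gamma)$ via the expansion $\log\gamma=\sum_k\frac{(-1)^{k-1}}{k}(\gamma-1)^k$ and a binomial-sum identity (their ``generalized Kronecker formula''), whereas you extract the coefficient of $N^1$ by interpolation. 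Your extraction is legitimate and clean: in bidegree $(r,s)$ the component of $\sum_{b=1}^N\gamma^{N-b}\wotimes\vert\gamma^b\vert'$ is $\frac{1}{r!\,s!}\sum_{b=1}^N(N-b)^rb^s$, a polynomial in $N$, the linear coefficient of $e^{Nx}\sum_{b=1}^Ne^{b(y-x)}$ is indeed $\frac{ue^u}{e^u-1}=1+\frac{u}{2}+\sum_{m\geq1}\frac{B_{2m}}{(2m)!}u^{2m}$, and after substituting $u\mapsto 1\wotimes t-t\wotimes 1$ and using $\vert 1\vert'=0$ (which both kills the constant term and truncates $p$ at $2m-1$) you land exactly on the stated formula, with the Bernoulli convention matching the paper's definition of $s(z)$.

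The one genuinely unfinished step is the one you flag: the crossing inventory for the opened spiral. You must verify, in the paper's conventions, that the hypothesis $\varepsilon(\overset\cdot\gamma(0),\overset\cdot\gamma(1))=+1$ is precisely the framing for which pushing the endpoint to $*^+$ forces the extra basepoint crossing whose interior loop is $\gamma^N$, that all $N$ crossings of the representative carry one and the same local sign, and that this sign combines with the global minus in the definition of $\mu$ to give the $+$ signs in $\mu(\gamma^N)=\sum_{b=1}^N\gamma^{N-b}\wotimes\vert\gamma^b\vert'$. This is not cosmetic: with the opposite framing the basepoint crossing is absent, the sum runs only over $b=1,\dots,N-1$, and the identical generating-function computation then produces $-\frac12\,1\wotimes\vert\log\gamma\vert'$ together with the same Bernoulli series --- so the $\pm\frac12$ term is exactly the datum your geometric step has to certify. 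Your consistency check that the $b=N$ term dies under $\delta$ (as it must, since $\delta$ vanishes on powers of a simple loop) is evidence for, but not a proof of, that sign analysis; once it is carried out honestly, your argument is complete and agrees with \cite{FKK}.
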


We can define the tensorial description of the map $\mu^\theta: 
\TT \to \TT\wotimes N(\TT)$ with respect to any group-like expansion 
$\theta$ of the fundamental group $\pi_1(S, *)$. 
Theorem \ref{dstd} follows immediately from the following.
\begin{thm}\label{mustd}
Let $\delta^{\rm std} = \delta^{\theta^{\rm std}}$ 
be the tensorial description of the Turaev cobracket with respect to 
the standard group-like expansion $\theta^{\rm std}$ for the surface 
$S = \Sigma_{0, n+1}$. Then, for any $m \geq 1$ and any
$k_1, k_2, \dots, k_m \in \{1, 2, \dots, n\}$,  we have
$$
\aligned
\mu^{\rm std}(x_{k_1}x_{k_2}&\cdots x_{k_m})\\
=  (1\otimes\vert\,\,\vert')\Big(&
\sum_{1\leq i < j \leq m}(
x_{k_1}\cdots x_{k_{i-1}}\wotimes 1)K_{k_ik_j}
(x_{k_{j+1}}\cdots x_{k_m}\wotimes x_{k_{i+1}}\cdots x_{k_{j-1}})
\\
& -\frac12\sum^m_{i=1}x_{k_1}\cdots x_{k_{i-1}}x_{k_{i+1}}\cdots x_{k_m}
\wotimes {x_{k_i}}\\
& + \sum^m_{i=1}\sum^\infty_{q=1}\frac{B_{2q}}{(2q)!}\sum^{2q-1}_{p=0}
(-1)^p\begin{pmatrix} 2q\\ p\end{pmatrix}
x_{k_1}\cdots x_{k_{i-1}}{x_{k_i}}^px_{k_{i+1}}\cdots x_{k_m}\wotimes
{x_{k_i}}^{2q-p}\Big).
\endaligned
$$
\end{thm}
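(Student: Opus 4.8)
The plan is to evaluate $\mu^{\rm std}$ through the commutative square defining it. Since $\theta^{\rm std}\colon\widehat{\qq\pi}\to\TT$ is an isomorphism of complete Hopf algebras with $\theta^{\rm std}(\log\gamma_k)=x_k$, the $\theta^{\rm std}$-preimage of the monomial $x_{k_1}\cdots x_{k_m}$ is the product $(\log\gamma_{k_1})\cdots(\log\gamma_{k_m})\in\widehat{\qq\pi}$. Hence it suffices to compute
$$\mu\big((\log\gamma_{k_1})\cdots(\log\gamma_{k_m})\big)\in\widehat{\qq\pi}\wotimes\widehat{\qq\hat\pi}$$
and to transport the answer by $\theta^{\rm std}\wotimes\theta^{\rm std}$, followed by $1\otimes\vert\,\,\vert'$.

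First I would record the multiplicative behaviour of $\mu$ promised in the text. Representing a concatenation $c_1\cdots c_m$ by juxtaposed immersed arcs and sorting its transverse double points by the pair of factors carrying the two incident strands, the defining self-intersection sum splits into within-factor terms and cross terms:
$$
\aligned
\mu(c_1\cdots c_m)=\ &\sum_{i=1}^m(c_1\cdots c_{i-1}\wotimes 1)\,\mu(c_i)\,(c_{i+1}\cdots c_m\wotimes 1)\\
&+\sum_{1\le i<j\le m}(c_1\cdots c_{i-1}\wotimes 1)\,\kappa(c_i,c_j)\,(c_{j+1}\cdots c_m\wotimes c_{i+1}\cdots c_{j-1}),
\endaligned
$$
where $\kappa(c_i,c_j)$ is the signed count of the crossings of the $i$-th strand with the $j$-th strand, the outer subarcs at each crossing feeding the first tensor slot and the inner subarcs (together with all intervening factors) the second. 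Both sides are continuous and multilinear in $(c_1,\dots,c_m)$ and agree whenever the $c_i$ lie in $\pi$, so the identity holds throughout $\widehat{\qq\pi}$ and I may set $c_i=\log\gamma_{k_i}$.

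For the within-factor terms each $\gamma_{k_i}$ is a positively framed simple loop, so Theorem \ref{fkk} computes $\mu(\log\gamma_{k_i})$ outright; transporting by $\theta^{\rm std}$ replaces $\log\gamma_{k_i}$ by $x_{k_i}$ and, once the surrounding monomials are attached, reproduces the Bernoulli line of the statement together with a residual degree-one term. For the cross terms I would identify $\kappa(\log\gamma_{k_i},\log\gamma_{k_j})$, after transport, with the element $K_{k_ik_j}$. This is where Theorem \ref{mt} enters: the homotopy intersection form of two standard generators is determined by the boundary values $\theta^{\rm std}(\gamma_k)=e^{x_k}$ and by the cyclic order of the boundary components. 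The splitting of each strand at a crossing manufactures the coproduct $\Delta$; the Massuyeau--Turaev closed form then packages the pairing into $(1\wotimes\iota)\Delta$ applied to $\epsilon_{kl}x_kx_l-\delta_{kl}x_k^2/(e^{-x_k}-1)$, where the off-diagonal part $\epsilon_{kl}x_kx_l$ encodes the relative position of $\pa_kS$ and $\pa_lS$, and the coincident part ($k=l$) encodes the self-linking of $\gamma_k$ through the function $s$ of the introduction.

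It remains to assemble the two families and apply $1\otimes\vert\,\,\vert'$: the cross family yields the $K_{k_ik_j}$ line, the within-factor family yields the Bernoulli line, and the several degree-one pieces---the $\tfrac12$-term of Theorem \ref{fkk} and the lowest-degree parts of the kernels $K_{k_ik_j}$---must recombine into the single $-\tfrac12$ line. I expect the main obstacle to be exactly the evaluation of $K_{kl}$ from Theorem \ref{mt}: fixing the self-linking (framing) contribution in the coincident case and then verifying that the Hopf-algebraic bookkeeping of coproducts, antipodes and the cyclic projection $\vert\,\,\vert'$ collapses all of the degree-one debris into the stated $-\tfrac12$ term. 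The gross shape of the formula, by contrast, is forced by the multiplicative splitting above.
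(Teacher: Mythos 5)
Your overall architecture is the same as the paper's: reduce to $\mu((\log\gamma_{k_1})\cdots(\log\gamma_{k_m}))$, apply the product formula for $\mu$ (which the paper quotes from \cite{KK3}, Corollary 4.3.4, rather than re-deriving), use Theorem \ref{fkk} for the diagonal terms, and identify the cross terms with $-K_{k_ik_j}$ via the homotopy intersection form. But the step you yourself flag as ``the main obstacle'' --- evaluating $\kappa^{\rm std}(x_k,x_l)$ and showing it equals $-K_{k,l}$ --- is exactly where the substance of the proof lies, and your proposal leaves it unresolved. Two concrete problems. First, Theorem \ref{mt} is stated only for $\Sigma_{g,1}$, i.e.\ for a surface with \emph{connected} boundary, so it cannot be applied directly to $S=\Sigma_{0,n+1}$; the paper has to glue a copy of $\Sigma_{1,1}$ onto each boundary component $\pa_kS$, $1\leq k\leq n$, to form $\hat S\cong\Sigma_{n,1}$, embed $\TT(H_1(S;\qq))$ into $\TT(H_1(\hat S;\qq))$ by $\imath(x_k)=A_kB_k-B_kA_k$, and extend $\theta^{\rm std}$ to a group-like expansion $\hat\theta$ built from symplectic expansions on the handles, so that $\log\hat\theta(\zeta)=\imath(\Xi)$ with $\Xi=x_1*x_2*\cdots*x_n$ the Baker--Campbell--Hausdorff product. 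Your appeal to ``the boundary values $\theta^{\rm std}(\gamma_k)=e^{x_k}$ and the cyclic order of the boundary components'' gestures at this but does not supply the construction.

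Second, even granting the reduction to $\hat S$, Theorem \ref{mt} produces the kernel $(-\imath(\Xi))\inv+\omega\,s(\imath(\Xi))\,\omega$, which a priori is an intractable BCH series; the entire point of Theorem \ref{Y} is that this collapses to $-\sum_{k>l}x_kx_l+\sum_k x_k^2/(e^{-x_k}-1)$. That collapse is proved by a genuinely nontrivial argument: the anti-automorphism $Q(x_k)=-x_{n-k}$ satisfying $Q(u\mt v)=-(Qu)\mt(Qv)$, the identity $Y\inv=-1+e^{-\Xi}$, and a reduction modulo the ideal $I$ generated by mixed monomials, which together pin down $Y$ up to its quadratic term. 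Nothing in your proposal produces, or even predicts the mechanism for, the off-diagonal term $\epsilon_{kl}x_kx_l$ or the diagonal term $x_k^2/(e^{-x_k}-1)$; you state the answer and attribute it to Theorem \ref{mt}, but Theorem \ref{mt} alone does not give it. The assembly steps you do describe (the product formula, the use of Theorem \ref{fkk}, the final substitution) are correct and match the paper, so the gap is localized but essential: without the gluing construction and the computation of Theorem \ref{Y}, the formula for $K_{k,l}$ is unproven.
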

Here it should be
remarked $\vert x_{k_1}\cdots x_{k_{i-1}}x_{k_{j+1}}\cdots x_{k_m}\vert' 
= \vert x_{k_{j+1}}\cdots x_{k_m}x_{k_1}\cdots x_{k_{i-1}}\vert' \in N(\TT)$. 
The rest of this paper is devoted to the proof of Theorem \ref{mustd}.\par
Our proof consists of Theorem \ref{Y}, Theorem \ref{fkk}, and
(a slight modification of) the tensorial 
description of the homotopy intersection form by Massuyeau-Turaev \cite{MT}, 
which we will explain later in short. 
Let $S$ be a (general) connected compact oriented surface with non-empty boundary.
Choose basepoints $*$ and $*^+$ in $\pa S$ as above.
Then, using a short path along the boundary from $*$ to $*^+$, 
we identify the fundamental groups $\pi = \pi_1(S, *)$ and $\pi_1(S, *^+)$
with the homotopy set of path from $*$ to $*^+$ and that from $*^+$ to $*$.
Then the homotopy intersection form 
$\eta: \zz\pi_1(S, *) \otimes
\zz\pi_1(S, *^+) \to \zz\pi$, introduced by Papakyriakopoulos 
\cite{Pa} and Turaev \cite{Tu78} independently, is defined as follows.
For $\gamma_1 \in \pi_1(S, *)$ and $\gamma_2 \in \pi_1(S, *^+)$ 
we choose their representatives in general position.
Then $\eta(\gamma_1, \gamma_2) \in \zz\pi$ is defined by 
$$
\eta(\gamma_1, \gamma_2) := \sum_{p \in \gamma\cap\delta}
\varepsilon_p(\gamma_1, \gamma_2)(\gamma_1)_{*p}(\gamma_2)_{p*^+},
$$
where $\varepsilon_p(\gamma_1, \gamma_2) \in \{\pm1\}$ is 
the local intersection number of $\gamma_1$ and $\gamma_2$ 
at the intersection point $p$, $(\gamma_1)_{*p}$ the segment
of $\gamma_1$ from $*$ to $p$, and $(\gamma_2)_{p*^+}$ 
that of $\gamma_2$ from $p$ to $*^+$. We define a map
$\kappa: \zz\pi\otimes \zz\pi \to \zz\pi\otimes \zz\pi$ 
by 
$$
\kappa(\gamma_1, \gamma_2) := 
- (1\otimes\gamma_2)\left(
(1\otimes\iota)\Delta\eta(\gamma_1, \gamma_2)
\right)(1\otimes\gamma_1)
$$
for $\gamma_1, \gamma_2 \in \pi$. 
In other words, if we denote $\Delta u = \sum u'\otimes u''$ and 
$\Delta v = \sum v'\otimes v''$ for $u, v \in \qq\pi$, we define
\begin{eqnarray}
\kappa(u,v) = -\sum(1\otimes v'')\left(
(1\otimes\iota)\Delta\eta(u', v')
\right)(1\otimes u'').
\label{kp}
\end{eqnarray}
Then we have a product formula
$$
\mu(\gamma_1\gamma_2) = \mu(\gamma_1)(\gamma_2\otimes 1)
+ (\gamma_1\otimes 1)\mu(\gamma_2) 
+ (1\otimes\vert\,\,\vert')\kappa(\gamma_1, \gamma_2).
$$
More generally, we have 
$$
\aligned
&\mu(u_1u_2\cdots u_m)\\
=& \sum^m_{i=1}((u_1\cdots u_{i-1})\otimes 1) \mu(u_i)
((u_{i+1}\cdots u_m)\otimes 1)\\
&+ \sum_{i<j}
((u_1\cdots u_{i-1})\otimes 1) (1\otimes\vert\,\,\vert')
\left(\kappa(u_i, u_j)
(u_{j+1}\cdots u_m\otimes u_{i+1}\cdots u_{j-1})\right)
\endaligned
\eqno{(*)}
$$
for any $m \geq 1$ and any $u_1, u_2, \dots, u_m \in \zz\pi$
\cite{KK3} (Corollary 4.3.4).\par
Massuyeau  and Turaev \cite{MT} gave explicitly the tensorial description
of the homotopy intersection form $\eta$ with respect to 
any symplectic expansion \cite{M} in the case $S = \Sigma_{g,1}$, 
$g \geq 1$, i.e., the boundary $\pa S$ is connected.
In this case, we denote by $\star \in \pa S$ a basepoint on the boundary, 
and by $\zeta \in \pi_1(S, \star)$ the simple loop 
along the boundary in the negative orientation.
The algebraic intersection number $H\otimes H \to \qq$, $X\otimes Y 
\mapsto X\cdot Y$, is a non-degenerate pairing on $H$. The symplectic form
$\omega := \sum^g_{i=1}A_iB_i-B_iA_i \in H^{\otimes 2} \subset \TT$ 
is independent of the choice of a symplectic basis $\{A_i, B_i\}^g_{i=1}
\subset H = H_1(\Sigma_{g,1}; \qq)$. 
Throughout this paper we omit the symbol $\otimes$ 
when it indicates the product in $\TT$.
We have $\theta(\zeta) \equiv 
1 + \omega \pmod{\TT_{\geq 3}}$ for any group-like expansion $\theta$. 
Massuyeau \cite{M} introduced 
the notion of a symplectic expansion: 
A group-like expansion $\theta: \pi \to \TT$ is {\it symplectic} if 
$\theta(\zeta) = \exp(\omega) (= \sum^\infty_{m=0}\frac{1}{m!}
\omega^m) \in \TT$, i.e., $\log\theta(\zeta) = \omega \in \TT$. 
Symplectic expansions (in rational coefficients) exist \cite{M} \cite{Ku1}.
See also \cite{K0} for symplectic expansions in real coefficients.
While their result deals only with symplectic expansions, but it is not hard to 
generalize it to any group-like expansion. \par
In order to give the tensorial description, 
Massuyeau and Turaev \cite{MT} introduced a continuous operation 
$\mt: \TT_{\geq1}
\times \TT_{\geq1} \to \TT$ by 
$$
(X_1\cdots X_{l-1}X_l)\mt (Y_1Y_2\cdots Y_m) := (X_l\cdot Y_1)X_1\cdots X_{l-1}
Y_2\cdots Y_m
$$
for any $l, m \geq 1$ and any $X_i$, $Y_j \in H = H_1(\Sigma_{g,1}; \qq)$. 
Minus the sympletic form is the unit for the operation $\mt$, i.e., 
$(-\omega)\mt u = u\mt (-\omega) = u$ for any $u \in \TT_{\geq1}$.
The restriction of $\mt$ to $\TT_{\geq2}$ is associative, and $\mt(\TT_{\geq l}
\times \TT_{\geq m}) \subset \TT_{\geq(l+m-2)}$. Hence, for any $Z \in 
(-\omega)+\TT_{\geq 3}$, there exists a unique $Z\inv \in 
(-\omega)+\TT_{\geq 3}$ such that $Z\mt Z\inv = Z\inv\mt Z = -\omega$.

\begin{thm}[Massuyeau-Turaev \cite{MT}]\label{mt}
Let $\theta: \pi_1(\Sigma_{g,1}, \star) \to \TT$ be a group-like expansion.
We denote $\Omega = \Omega^\theta 
:= \log\theta(\zeta) \in \omega + \TT_{\geq3}$.
Then the tensorial description of the homotopy intersection form $\eta: 
\widehat{\mathbb{Q}\pi}\times\widehat{\mathbb{Q}\pi} 
\to \widehat{\mathbb{Q}\pi}$ with respect to the expansion $\theta$, 
$\rho^\theta$, is given by 
$$
\rho^\theta(a, b) = (a- \varepsilon(a))\mt ((-\Omega)\inv + \omega
s(\Omega)\omega)\mt (b - \varepsilon(b))
$$
for any $a, b \in \TT$. 
\end{thm}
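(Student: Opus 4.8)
The plan is to show that both sides of the asserted identity are \emph{Fox pairings} on $\TT$ — bilinear maps that are left derivations in the first slot and right derivations in the second — and that any such pairing is pinned down by its restriction to $H\times H$. First I would record the Leibniz rules for the homotopy intersection form. If $\gamma_1=\alpha\beta$ is a concatenation, the intersection points of $\gamma_1$ with a second curve split into those on $\alpha$ and those on $\beta$, and for the latter the initial segment $(\gamma_1)_{*p}$ acquires the prefix $\alpha$; this gives $\eta(\alpha\beta,\gamma_2)=\eta(\alpha,\gamma_2)+\alpha\,\eta(\beta,\gamma_2)$, and a parallel analysis of the terminal segment gives $\eta(\gamma_1,\alpha\beta)=\eta(\gamma_1,\alpha)\beta+\varepsilon(\alpha)\eta(\gamma_1,\beta)$. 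Transporting through the isomorphism $\theta$ (multiplicative and $\varepsilon$-preserving) turns these into
$$\rho^\theta(ab,c)=\rho^\theta(a,c)\varepsilon(b)+a\,\rho^\theta(b,c),\qquad \rho^\theta(a,bc)=\rho^\theta(a,b)c+\varepsilon(b)\,\rho^\theta(a,c),$$
so $\rho^\theta$ is a Fox pairing with $\rho^\theta(1,\cdot)=\rho^\theta(\cdot,1)=0$.

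Next I would observe that these two rules force, on monomials, $\rho^\theta(x_1\cdots x_l,\,y_1\cdots y_m)=x_1\cdots x_{l-1}\,\rho^\theta(x_l,y_1)\,y_2\cdots y_m$ (the intermediate $\varepsilon$'s vanish on $H$), so a Fox pairing is determined by its values on $H\times H$. The right-hand side $F(a,b):=(a-\varepsilon(a))\mt\big((-\Omega)\inv+\omega s(\Omega)\omega\big)\mt(b-\varepsilon(b))$ is itself a Fox pairing: using the one-letter nature of the operation, $(uv)\mt w=u(v\mt w)$ and $w\mt(uv)=(w\mt u)v$ for $u,v\in\TTone$, together with associativity of $\mt$ on $\TT_{\geq2}$, one checks the same two Leibniz rules, and $F$ extends its restriction $F(x,y)=x\mt M\mt y$ in exactly the monomial form above, where $M:=(-\Omega)\inv+\omega s(\Omega)\omega$. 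Hence it suffices to prove $\rho^\theta(x,y)=x\mt M\mt y$ for all $x,y\in H$.

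The heart of the argument is then this computation on $H\times H$. Since $-\omega$ is the $\mt$-unit and $M\in(-\omega)+\TT_{\geq3}$, one has $x\mt M\mt y=(x\cdot y)\,1+(\text{higher order})$, whose constant term is the algebraic intersection number and therefore matches the degree-$0$ part of $\rho^\theta(x,y)$. The higher-order terms must be shown to be governed \emph{entirely} by $\Omega=\log\theta(\zeta)$. The mechanism is the boundary constraint: because $\zeta$ is freely homotopic into $\pa S$, its intersections with any curve are controlled, which normalizes $\eta$ against $\zeta$; transported by $\theta$ this pins $\rho^\theta(\cdot,\Omega)$ and $\rho^\theta(\Omega,\cdot)$ to simple expressions, and inverting this relation with respect to $\mt$ is what produces the factor $(-\Omega)\inv$. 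The residual symmetric contribution — the self-linking/framing correction arising from the self-intersections of the boundary loop — is what generates $\omega s(\Omega)\omega$, with $s(z)=\frac1{e^{-z}-1}+\frac1z$, and hence the Bernoulli numbers, appearing precisely when one expands the $1/(e^{\Omega}-1)$-type data.

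The main obstacle is this last step: establishing the precise boundary/duality identity for $\eta$ and extracting from it both the antisymmetric part $(-\Omega)\inv$ and the symmetric part $\omega s(\Omega)\omega$. For a symplectic expansion, where $\Omega=\omega$ and $(-\omega)\inv=-\omega$, this is exactly the theorem of Massuyeau--Turaev \cite{MT}, so I would first invoke their result in that case and then argue that the only place symplecticity enters is the value $\theta(\zeta)=e^{\omega}$ at the boundary; replacing it by the general group-like value $\theta(\zeta)=e^{\Omega}$ leaves the derivation intact and substitutes $\omega\rightsquigarrow\Omega$ in the inverted boundary datum, yielding the stated formula for an arbitrary group-like expansion $\theta$.
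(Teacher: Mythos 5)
Your setup is sound and matches the paper's framework: $\rho^\theta$ is a Fox pairing, Fox pairings on $\TT$ are determined by their restriction to $H\times H$ (equivalently, by a representing tensor acted on via $\mt$), and the theorem reduces to identifying that tensor as $(-\Omega)\inv+\omega s(\Omega)\omega$. The gap is in the decisive step, which you yourself flag as the ``main obstacle'' and then resolve only by the heuristic ``invoke Massuyeau--Turaev for symplectic expansions and substitute $\omega\rightsquigarrow\Omega$.'' This cannot work as a determination procedure, because the symplectic specialization is degenerate: when $\Omega=\omega$ one has $(-\omega)\inv=-\omega$ (since $-\omega$ is the $\mt$-unit), so the symplectic formula $(-\omega)+\omega s(\omega)\omega$ does not distinguish between the candidate generalizations $(-\Omega)\inv$, $-\Omega$, $-\omega$ in the first slot, nor between $\omega s(\Omega)\omega$, $\Omega s(\Omega)\Omega$, etc.\ in the second. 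The final formula is emphatically \emph{not} obtained by substituting $\Omega$ for $\omega$ uniformly --- the outer factors in $\omega s(\Omega)\omega$ stay as $\omega$ --- so one must actually re-derive the tensor from the boundary condition rather than pattern-match.

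What the paper does, and what your argument is missing, is the explicit solution of that boundary condition. The input from Massuyeau--Turaev valid for an \emph{arbitrary} group-like expansion is the characterization $\rho^\theta(X,e^{-\Omega})=X$ for all $X\in H$ (among Fox pairings). Using the identity $s(z)z-1=z(e^{-z}-1)\inv$ this is converted to $\rho^\theta(X,\Omega)=Xs(\Omega)\Omega-X$; one then splits off the Fox pairing $(a,b)\mapsto(a-\varepsilon(a))s(\Omega)(b-\varepsilon(b))$ (which accounts for the $\omega s(\Omega)\omega$ term once rewritten in $\mt$-form, since $u\mt(\omega v)=-uv$), reducing to the unique Fox pairing $\rho_\Omega$ with $\rho_\Omega(X,\Omega)=-X$; and finally one observes that its representing tensor $R_\Omega$ satisfies $R_\Omega\mt\Omega=\omega$, i.e.\ $R_\Omega=(-\Omega)\inv$. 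Each of these is a short computation, but together they are the entire content of the theorem; without them your proposal establishes only that $\rho^\theta$ is represented by \emph{some} tensor in $(-\omega)+\TT_{\geq 3}$, not that this tensor is $(-\Omega)\inv+\omega s(\Omega)\omega$.
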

\begin{proof}
We modify the proof of Theorem 10.4 in Massuyeau-Turaev \cite{MT}. 
The tensorial description $\rho^\theta$ is characterized by the condition
\begin{equation}
\forall X \in H, \quad \rho^\theta(X, e^{-\Omega}) = X.
\label{21}
\end{equation}
Since $s(z)z -1 = z(e^{-z}-1)\inv$, we have $$
\rho^\theta(X, e^{-\Omega}) 
= \rho^\theta(X, \Omega)\frac{e^{-\Omega}-1}{\Omega}  
= \rho^\theta(X, \Omega)(s(\Omega)\Omega -1)\inv.
$$
Hence the condition (\ref{21}) is equivalent to 
\begin{equation}
\forall X \in H, \quad \rho^\theta(X, \Omega) = Xs(\Omega)\Omega - X.
\label{22}
\end{equation}
Now the map $(a, b) \in \TT\times\TT \mapsto (a-\varepsilon(a))s(\Omega)
(b-\varepsilon(b)) \in \TT$ is a Fox pairing in the sense of Massuyeau-Turaev
\cite{MT}. Hence, if we introduce a unique Fox pairing $\rho_\Omega: \TT\times 
\TT \to \TT$ characterized by the condition 
\begin{equation}
\forall X \in H, \quad \rho_\Omega(X, \Omega) = -X,
\label{23}
\end{equation}
then we have 
$$
\rho^\theta(a, b) = \rho_\Omega(a, b) + (a-\varepsilon(a))s(\Omega)
(b-\varepsilon(b))
$$
for any $a$ and $b \in \TT$. Let $\{A_i, B_i\}^g_{i=1} \subset H$ 
be a symplectic basis. The tensor
$$
\aligned
R_\Omega := \sum^g_{i,j=1}(& -B_i\rho_\Omega(A_i, A_j)B_j
+B_i\rho_\Omega(A_i, B_j)A_j\\
&+A_i\rho_\Omega(B_i, A_j)B_j
-A_i\rho_\Omega(B_i, B_j)A_j) \in \TT_{\geq2}
\endaligned
$$
satisfies $\rho_\Omega(a, b) = (a-\varepsilon(a))\mt R_\Omega\mt
(b-\varepsilon(b))$ for any $a$ and $b \in \TT$. Then the condition 
(\ref{23}) is equivalent to $R_\Omega\mt\Omega = \omega$. 
This means $R_\Omega = (-\Omega)\inv$. Therefore we have
$$
\aligned
\rho^\theta(a, b) &= (a-\varepsilon(a))\mt R_\Omega\mt(b-\varepsilon(b))
+ (a-\varepsilon(a))s(\Omega)(b-\varepsilon(b))\\
&= 
(a-\varepsilon(a))\mt((-\Omega)\inv + \omega s(\Omega)\omega)\mt
(b-\varepsilon(b)).
\endaligned
$$
This proves the theorem.
\end{proof}

\section{Proof of the Result}

Now we begin the proof of Theorem \ref{mustd}, from which 
Theorem \ref{dstd} follows immediately by (\ref{20}). 
Let $S$ be the genus $0$ compact surface $\Sigma_{0, n+1}$ 
for some $n \geq 0$. We consider the standard group-like expansion
$\theta^{\rm std}: \pi = \pi_1(S, *) \to \TT = \TT(H_1(\Sigma_{0, n+1}; \qq))$.
Choose one point $*_k \in \pa_kS$ for each component $\pa_kS$ and 
let $\xi_k \in \pi_1(S, *_k)$ be the simple positive boundary loop 
for $1 \leq k \leq n$. We can choose a simple path $\chi_k$ from $* \in \pa_0S$
to $*_k$ such that $\chi_k\xi_k{\chi_k}\inv = \gamma_k \in \pi_1(S, *)$. 
We glue $n$ copies of the surface $\Sigma_{1,1}$ 
to the surface $S = \Sigma_{0,n+1}$ along the boundary $\pa_kS$, 
$1 \leq k \leq n$, such that the basepoints $\star$ and $*_k$ are identified 
with each other. This gluing yields a surface $\hat S \cong \Sigma_{n,1}$. 
Let $\{\alpha_k, \beta_k\}$ be a symplectic generator 
of the fundamental group of the $k$-th copy of $\Sigma_{1,1}$ with basepoint
$\star$. Then the set $\{\chi_k\alpha_k{\chi_k}\inv, 
\chi_k\beta_k{\chi_k}\inv\}^n_{k=1}$ is a symplectic generator of the fundamental 
group $\pi_1(\hat S, *)$. If we denote $A_k := [\chi_k\alpha_k{\chi_k}\inv]$
and $B_k := [\chi_k\beta_k{\chi_k}\inv] \in H_1(\hat S; \qq)$, then 
the set $\{A_k, B_k\}^g_{k=1}$ is a symplectic basis of the homology group
$H_1(\hat S; \qq)$. The map $\imath: \TT = \TT(H_1(S; \qq)) \to 
\TT(H_1(\hat S; \qq))$ defined by $\imath(x_k) := A_kB_k - B_kA_k$ 
is an {\it injective} algebra homomorphism. See \cite{KK2} \S6.2. \par
Let $\theta_k: \pi_1(\Sigma_{1,1}, \star) \to \TT(H_1(\Sigma_{1,1}; \qq))$ 
be a symplectic expansion for the $k$-th copy of $\Sigma_{1,1}$. 
We identify the target with the completed tensor algebra 
$\TT(\qq A_k\oplus \qq B_k)\subset \TT(H_1(\hat S; \qq))$, and 
define a group-like expansion 
$
\hat\theta: \pi_1(\hat S, *) \to \TT(H_1(\hat S; \qq))
$
by $\hat\theta(\chi_k\alpha_k{\chi_k}\inv) := \theta_k(\alpha_k)$ and 
$\hat\theta(\chi_k\beta_k{\chi_k}\inv) := \theta_k(\beta_k)$. 
Then the diagram 
$$
\begin{CD}
\pi_1(S, *) @>{\theta^{\rm std}}>> \TT(H_1(S; \qq))\\
@V{i_*}VV @V{\imath}VV\\
\pi_1(\hat S, *) @>{\hat\theta}>> \TT(H_1(\hat S; \qq))
\end{CD}
$$
commutes, where $i: (S, *) \hookrightarrow (\hat S, *)$ is the inclusion.
We have $\hat\theta(\zeta) = \prod^n_{k=1}\exp(A_kB_k- B_kA_k)
= \imath(\prod^n_{k=1}\exp(x_k))$. 
Here we denote by $u*v$ the Baker-Campbell-Hausdorff series 
of $u$ and $v \in \TT_{\geq 1} = \TT(H_1(S; \qq))_{\geq 1}$ 
$$
u*v := \log((\exp u)(\exp v)) 
= u+ v + \frac12[u,v] + \frac1{12}[u, [u, v]] + \frac1{12}[v, [v, u]] + \cdots,
$$
and consider the element $\Xi := x_1*x_2*\cdots*x_n \in \TT_{\geq 1}$. 
Then we obtain $\log\hat\theta(\zeta) = \imath(\Xi) \in \TT(H_1(\hat S; \qq))$, 
and, from the Massuyeau-Turaev theorem \ref{mt}, 
\begin{equation}
\rho^{\hat\theta}(a, b) = (a- \varepsilon(a))\mt ((-\imath(\Xi))\inv + \omega
s(\imath(\Xi))\omega)\mt (b - \varepsilon(b))
\label{31}
\end{equation}
for any $a, b \in \TT(H_1(\hat S; \qq))$. 
\par
By the injective homomorphism $\imath$, the Massuyeau-Turaev operation 
$\mt$ on $\TT(H_1(\hat S; \qq))$ induces a continuous operation on 
$\TT_{\geq1} = \TT(H_1(\Sigma_{0, n+1}; \qq))_{\geq1}$, 
$
\mt: \TTone\times\TTone \longrightarrow \TTone
$,
given by 
$$
x_{i_1}\cdots x_{i_{l-1}}x_{i_l}\mt x_{j_1}x_{j_2}\cdots x_{j_m}
= -\delta_{i_lj_1} 
x_{i_1}\cdots x_{i_{l-1}} x_{j_1}x_{j_2}\cdots x_{j_m}
$$
for $l, m \geq 1$ and $1 \leq i_1, \dots, i_l, j_1, \dots, j_m \leq n$.
In fact, we have $(A_kB_k- B_kA_k)\mt(A_lB_l- B_lA_l) 
= - \delta_{kl}(A_kB_k- B_kA_k)$ for $1 \leq k, l\leq n$.
The operation $\mt$ on $\TT_{\geq 1}
$ is associative 
with unit $x_0 = -\sum^n_{k=1}x_k$. 
Thus we can take the inverse element $Z\inv$ 
of any $Z \in x_0 + \TT_{\geq 2}$ with respect to 
the operation $\mt$,
$
Z\inv\mt Z = Z\mt Z\inv = x_0.
$
\par
Consider the inverse element $-\Xi\inv$ of $-\Xi = -x_1*x_2*\cdots*x_n$ 
with respect to the operation $\mt$. 
\begin{thm}\label{Y}
$$
-\Xi\inv + x_0s(\Xi)x_0 = x_0 - \sum_{k>l}x_kx_l + \sum^n_{k=1}s(x_k){x_k}^2
= - \sum_{k>l}x_kx_l + \sum^n_{k=1}\frac{{x_k}^2}{e^{-x_k}-1}.
$$
\end{thm}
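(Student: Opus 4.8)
The plan is to avoid computing the $\mt$-inverse $(-\Xi)\inv$ head-on and instead to verify the identity after right-$\mt$-multiplication by $-\Xi$. Write $C := x_0 - \sum_{k>l}x_kx_l + \sum_{k=1}^n s(x_k){x_k}^2$ for the proposed value and $G := (-\Xi)\inv + x_0 s(\Xi)x_0$ for the quantity in the statement; the second displayed equality is immediate from $s(z) = (e^{-z}-1)\inv + z\inv$, so it suffices to prove $C = G$. Since $-\Xi \in x_0 + \TT_{\geq2}$ is $\mt$-invertible, right-$\mt$-multiplication by $-\Xi$ is a bijection of $\TT_{\geq1}$, and hence $C=G$ will follow once I check the single relation $C\mt\Xi = G\mt\Xi$.

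Computing $G\mt\Xi$ is the easy half. By definition of the inverse, $(-\Xi)\inv\mt(-\Xi) = x_0$, hence $(-\Xi)\inv\mt\Xi = -x_0$; and since $\mt$ records only the extreme tensor-letters of its two arguments, the trailing $x_0 = -\sum_c x_c$ of $x_0 s(\Xi)x_0$ contracts against the leading letters of $\Xi$, giving $(x_0 s(\Xi)x_0)\mt\Xi = x_0 s(\Xi)\Xi$. Thus $G\mt\Xi = -x_0 + x_0 s(\Xi)\Xi$. Writing $U := \exp(\Xi) = \prod_k e^{x_k}$ and using $e^{-\Xi}=U\inv$, the identity $s(\Xi)\Xi = 1 - U\,\frac{\Xi}{U-1}$ collapses this to $G\mt\Xi = -x_0\,U\,\frac{\Xi}{U-1}$, where $\frac{\Xi}{U-1} = \frac{\Xi}{e^{\Xi}-1}$ is an honest power series with constant term $1$.

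For the other half I record the elementary description $u\mt v = -\sum_b(\partial^R_b u)\,x_b\,(\partial_b v)$, where $\partial_b$ (resp.\ $\partial^R_b$) strips a leading (resp.\ trailing) letter $x_b$, so that $v = \varepsilon(v) + \sum_b x_b\,\partial_b v$. Applying this, $C\mt\Xi = \Xi + \sum_{k>l}x_kx_l\,\partial_l\Xi - \sum_k s(x_k){x_k}^2\,\partial_k\Xi$, and the whole computation is reduced to the left Fox derivatives $\partial_b\Xi$. The key step is the closed form $\partial_b\Xi = \frac{e^{x_b}-1}{x_b}\,e^{x_{b+1}}\cdots e^{x_n}\cdot\frac{\Xi}{U-1}$: the factor $\frac{\Xi}{U-1}$ comes from $\partial_b(e^{\Xi}) = (\partial_b\Xi)\,\frac{e^{\Xi}-1}{\Xi}$ (valid because $\partial_b$ only reads the leading letter and $\Xi$ commutes with its own powers), while $\partial_b U = \frac{e^{x_b}-1}{x_b}\,e^{x_{b+1}}\cdots e^{x_n}$ is obtained by peeling the exponentials off the left. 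Substituting makes every summand of $C\mt\Xi$ acquire the common right factor $\frac{\Xi}{U-1}$, which I cancel (it is invertible) against the same factor in $G\mt\Xi$.

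After cancellation, and setting $E_b := e^{x_b}\cdots e^{x_n}$ (so $E_1=U$, $E_{n+1}=1$), the relation $C\mt\Xi = G\mt\Xi$ becomes a statement with no Baker--Campbell--Hausdorff series left at all:
\[
(U-1) + \sum_l\Big(\sum_{k>l}x_k\Big)(E_l - E_{l+1}) - \sum_k\big((E_k - E_{k+1}) - x_kE_k\big) = \Big(\sum_k x_k\Big)U .
\]
This follows from the telescoping relations $(e^{x_k}-1)E_{k+1} = E_k - E_{k+1}$ and $x_ke^{x_k}E_{k+1} = x_kE_k$, after reversing the order of summation in the double sum. The Bernoulli numbers enter exactly once, through the diagonal terms: the identity $s(z)z(e^{z}-1) = e^{z}-1-ze^{z}$ is what turns $s(x_k){x_k}^2\,\partial_k\Xi$ into the telescoping shape $\big((E_k-E_{k+1})-x_kE_k\big)\frac{\Xi}{U-1}$. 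The main obstacle is precisely this middle step — producing the closed form for $\partial_b\Xi$ and recognizing that the $s(x_k)$-weighted diagonal contributions collapse via that Bernoulli identity so that the residual identity telescopes; everything past that point is bookkeeping.
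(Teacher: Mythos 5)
Your proof is correct, and it takes a genuinely different route from the paper. The paper first pins down the \emph{shape} of $Y:=-\Xi\inv+x_0s(\Xi)x_0$ by structural arguments: it computes $Y_{(1)},Y_{(2)}$ by hand, introduces the involution $Q(x_k)=-x_{n+1-k}$ (which anti-commutes with $\mt$ and sends $\Xi$ to $-\Xi$) to get $QY=-Y-x_0^2$, shows via a $\mt$-geometric series built from $Y\inv=-1+e^{-\Xi}=-1+e^{-x_n}\cdots e^{-x_1}$ that $Y$ lies in the span $W$ of weakly decreasing monomials, and concludes that $Y-Y_{(2)}\in\bigoplus_k x_k\qq[[x_k]]$; only the single-variable diagonal part then remains, which it identifies by a short computation modulo the ideal of mixed monomials. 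You instead verify the identity head-on by right-$\mt$-multiplying both sides by $\Xi$ (legitimate, since $\mt$ is associative on $\TTone$ and $-\Xi$ is $\mt$-invertible), computing $G\mt\Xi=-x_0\,e^{\Xi}\,\Xi(e^{\Xi}-1)\inv$ from the definition of the inverse, and computing $C\mt\Xi$ via the closed form $\partial_b\Xi=\frac{e^{x_b}-1}{x_b}e^{x_{b+1}}\cdots e^{x_n}\cdot\Xi(e^{\Xi}-1)\inv$ for the left letter-stripping operators; after the Bernoulli identity $s(z)z(e^z-1)=(e^z-1)-ze^z$ the comparison collapses to a telescoping identity in $E_b=e^{x_b}\cdots e^{x_n}$, which I have checked. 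The two arguments share one essential input --- the factorization $e^{\pm\Xi}=e^{\pm x_1}\cdots e^{\pm x_n}$ in the appropriate order, which is what makes $Y\inv$ (in the paper) and $\partial_b\Xi$ (in your version) explicitly computable --- but your approach dispenses entirely with the symmetry $Q$, the subalgebra $W$ and the mod-$I$ reduction, producing the full answer in one computation; the paper's route is less computational per step but needs several separate structural lemmas to isolate the diagonal part where the Bernoulli numbers live.
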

\begin{proof}
We denote the left-hand side by
$$
Y := - \Xi\inv +x_0s(\Xi)x_0 = \sum^\infty_{m=1} Y_{(m)}, \quad Y_{(m)} \in H^{\otimes m}.
$$
Since $\Xi \equiv -x_0 + \frac12\sum_{k<l} [x_k, x_l] \pmod{\TT_{\geq3}}$, 
we have $Y_{(1)} = x_0$ and 
$$
\aligned
Y_{(2)} & = \frac12\sum_{k<l} [x_k, x_l] - \frac12{x_0}^2\\
& = \frac12\sum_{k<l} (x_kx_l-x_lx_k) - \frac12\sum_{k<l} (x_kx_l+x_lx_k) 
- \frac12\sum^n_{k=1}{x_k}^2\\
& = - \sum_{k>l} x_kx_l - \frac12\sum^n_{k=1}{x_k}^2.
\endaligned
$$
To compute the higher degree term $Y_{(m)}$ for each $m \geq 3$, 
we introduce a topological algebra automorphism $Q$ of $\TT$ 
defined by 
$$
Q(x_k) = -x_{n-k}, \quad 1 \leq k \leq n,
$$
inspired by Kuno's work \cite{Ku2}. See also \cite{Ku1} Example 5.3. 
It is clear to see $Q(\Xi) = -\Xi$ and $Qx_0 = -x_0$. 
Here we have 
\begin{equation*}
Q(u\mt v) = -(Qu)\mt(Qv)
\end{equation*}
for any $u$ and $v \in \TTone$. In fact, we compute 
$(Qx_k)\mt(Qx_l) = (-x_{n-k})\mt(-x_{n-l}) = -\delta_{kl}x_{n-k}
= Q(\delta_{kl}x_k) = -Q(x_k\mt x_l)$ for any $1 \leq k, l \leq n$. 
In particular, for any $Z \in x_0 + \TT_{\geq 2}$, we have 
$x_0 = -Qx_0 = -Q(Z\mt Z\inv) = (QZ)\mt
(QZ\inv)$, and so $Q(Z\inv) = (QZ)\inv$. 
Moreover we have $s(-z) = -1 -s(z)$. Therefore
\begin{equation}
QY = -(Q\Xi)\inv + x_0s(Q\Xi)x_0 
= \Xi\inv - {x_0}^2 - x_0s(\Xi)x_0 = -Y - {x_0}^2.
\label{QY}
\end{equation}
On the other hand, we have 
\begin{equation}
Y\inv = -1 + e^{-\Xi} = -1 + e^{-x_n}\cdots e^{-x_2} e^{-x_1}.
\label{Y-1}
\end{equation}
In fact, $\Xi = \Xi \mt Y\mt Y\inv 
= -\Xi\mt\Xi\inv\mt Y\inv + \Xi s(\Xi)Y\inv
= -Y\inv + \Xi s(\Xi)Y\inv
= \dfrac{\Xi}{e^{-\Xi}-1}Y\inv
$.
Since the algebra $\TT$ has no zero divisor, we obtain (\ref{Y-1}).
\par
Let $W$ (resp.\ $I$) be the closed linear subspace in $\TTone$ 
generated by the set $\{x_{k_1}x_{k_2}\cdots\linebreak x_{k_m}; \,\, 
k_1\geq k_2\geq \dots \geq k_m\}$
(resp.\ $\{x_{k_1}x_{k_2}\cdots x_{k_m}; \,\, 
\sharp\{k_1, k_2, \dots, k_m\} \geq 2\}$).
The subspace $W$ (resp.\ $I$) is a subalgebra (resp.\ a two-sided ideal) 
of $\TTone$ with respect to the multiplication $\mt$. 
Since 
\begin{equation*}
Y = x_0 + \sum^\infty_{m=1}
\overbrace{(x_0 - Y\inv)\mt(x_0 - Y\inv)\mt\cdots \mt (x_0- Y\inv)}^{\text{$m$ times}}.
\end{equation*}
and $x_0 - Y\inv \in W$ from (\ref{Y-1}), we have $Y \in W$. 
It is clear that the direct sum decomposition 
$W = (W\cap I) \oplus \bigoplus^n_{k=1}x_k\qq[[x_k]]$ holds, and so 
$W \cap \Ker(Q+1) \subset \bigoplus^n_{k=1} x_k\qq[[{x_k}]]$, 
while we have $Q(Y - Y_{(2)}) = -(Y - Y_{(2)})$ from (\ref{QY}).
Hence we have $Y - Y_{(2)} \in \bigoplus^n_{k=1}x_k\qq[[x_k]]$.
This implies that it suffices to show the theorem 
modulo the ideal $I$. From (\ref{Y-1}) we have 
$$
\aligned
& Y\inv\mt(x_0+ \sum^n_{k=1}{x_k}^2s(x_k))
= Y\inv\mt (\sum^n_{k=1}x_k\dfrac{x_k}{e^{-x_k} - 1})\\
\equiv &(\sum^n_{k=1}e^{-x_k} -1) \mt
(\sum^n_{k=1}x_k\dfrac{x_k}{e^{-x_k} - 1})
= - \sum^n_{k=1}(e^{-x_k} - 1)\dfrac{x_k}{e^{-x_k} - 1} = x_0.
\endaligned
$$
Hence we have $Y \equiv x_0+ \sum^n_{k=1}{x_k}^2s(x_k) \pmod{I}$, 
as was to be shown.
\end{proof}

As a corollary, we conclude
\begin{equation}
\rho^{\theta^{\rm std}}(a, b) = 
(a- \varepsilon(a))\mt (
- \sum_{k>l}x_kx_l + \sum^n_{k=1}\frac{{x_k}^2}{e^{-x_k}-1}
)\mt (b - \varepsilon(b))
\label{32}
\end{equation}
for any $a, b \in \TT = \TT(H_1(S; \qq))$. 
In particular, by (\ref{kp}), we have
\begin{eqnarray}
\kappa^{\rm std}(x_k, x_l) &=& - ((1\wotimes \iota)\Delta\left(\epsilon_{kl}x_{k}x_{l} - 
\delta_{kl}\frac{{x_{k}}^2}{e^{-x_{k}}-1}\right)  = 
- K_{k,l} \in \TT\wotimes\TT,
\label{33}
\end{eqnarray}
where $\kappa^{\rm std}$ is the tensorial description of $\kappa$ 
with respect to the standard exponential expansion $\theta^{\rm std}$. 
Recall $x_k = \log\theta^{\rm std}(\gamma_k)$. Consequently, 
substituting (\ref{33}) and Theorem \ref{fkk} to the product formula (*), 
we obtain Theorem \ref{mustd}.
This completes the proof.
\qed


\vskip 10mm
\noindent
Department of Mathematical Sciences, \\
University of Tokyo \\
3-8-1 Komaba, Meguro-ku, Tokyo, \\
153-8914, JAPAN. \\
kawazumi@ms.u-tokyo.ac.jp\\
\\

\begin{thebibliography}{00}

\bibitem{AKKN} A.\ Alekseev, N.\ Kawazumi, Y.\ Kuno and F.\ Naef,
The Goldman-Turaev Lie bialgebra in genus zero and the Kashiwara-Vergne problem,
in preparation.

\bibitem{AT} A.\ Alekseev and C.\ Torossian,
The Kashiwara-Vergne conjecture and Drinfeld's associators,
Ann.\ of Math.\  \textbf{175}, 415-463 (2012)

\bibitem{FKK} S. Fukuhara, N.\ Kawazumi and Y.\ Kuno,
Generalized Kronecker formula for Bernoulli numbers and self-intersections 
of curves on a surface, 
preprint, arXiv: 1505.04840.

\bibitem{Go} W.\ M.\ Goldman,
Invariant functions on Lie groups and
Hamiltonian flows of
surface groups representations,
Invent.\ math.\ \textbf{85}, 263-302 (1986)

\bibitem{K0} N.\ Kawazumi, 
Harmonic Magnus expansion on the universal family of Riemann surfaces,
preprint, math.GT/0603158 (2006)

\bibitem{K1} N.\ Kawazumi, 
A regular homotopy version of the Goldman-Turaev Lie bialgebra, the Enomoto-Satoh traces and the divergence cocycle in the Kashiwara-Vergne problem, 
RIMS K\^oky\^uroku \textbf{1936}, 137-141 (2015),
also available at arXiv:1406.0056

\bibitem{K2} N. \ Kawazumi,
A tensorial description of the Turaev cobracket on genus 0 compact surfaces,
preprint, arXiv: 1506.03174.
 
\bibitem{KK1} N.\ Kawazumi and Y.\ Kuno,
The logarithms of Dehn twists, 
Quantum Topol. \textbf{5}, 347-423 (2014)

\bibitem{KK2} N.\ Kawazumi and Y.\ Kuno,
Groupoid-theoretical methods in the mapping class groups of surfaces, 
preprint, arXiv: 1109.6479v3.

\bibitem{KK3} N.\ Kawazumi and Y.\ Kuno,
Intersections of curves on surfaces and their applications to mapping class groups,
Ann.\ Inst.\ Fourier, \textbf{65}, 2711-2762 (2015)

\bibitem{KKs} N.\ Kawazumi and Y.\ Kuno,
The Goldman-Turaev Lie bialgebra and the Johnson homomorphisms,
in: ``Handbook of Teichm\"uller theory", (ed.\ by A.\ Papadopoulos), vol.\ 5, 
EMS Publishing House, Zurich, 2015, pp. 98--165.

\bibitem{Ku1} Y.\ Kuno,
A combinatorial construction of symplectic expansions, 
Proc.\ Amer.\ Math.\ Soc. \textbf{140}, 1075-1083 (2012) 

\bibitem{Ku2} Y.\ Kuno,
private communication

\bibitem{M} G. \ Massuyeau, 
Infinitesimal Morita homomorphisms and the tree-level of the LMO invariant,
Bull.\ Soc.\ math.\ France, {\bf 140} (2012) 101--161.

\bibitem{M2} G. \ Massuyeau, 
Formal description of Turaev's loop operations,
preprint, arXiv: 1511.03974.

\bibitem{MT} G.\ Massuyeau and V.\ Turaev, 
Fox pairings and generalized Dehn twists, 
Ann.\ Inst.\ Fourier \textbf{63},  2403--2456 (2013)

\bibitem{MT2} G.\ Massuyeau and V.\ Turaev, 
in preparation

\bibitem{N} 
F.\ Naef, in preparation

\bibitem{Pa} C.\ D.\  Papakyriakopoulos, 
Planar regular coverings of orientable closed surfaces, 
in: `Knots, groups, and 3-manifolds', 261--292. Ann. of Math. Studies, No. 84, Princeton Univ. Press, Princeton, N.J., (1975)

\bibitem{Sch} T.\ Schedler,
A Hopf algebra quantizing a necklace Lie algebra canonically associated to a quiver,
Intern.\ Math.\ Res.\ Notices \textbf{12}, 725--760 (2005)


\bibitem{Tu78} V.\ G.\ Turaev,
Intersections of loops in two-dimensional manifolds,
(Russian) Mat.\ Sb.\ \textbf{106(148)}, 566-588 (1978).
English translation: Math.\ USSR-Sb.\textbf{35}, 229--250 (1979).


\bibitem{T} V.\ G.\ Turaev,
Skein quantization of Poisson algebras of loops on surfaces,
Ann.\ sci.\ \'Ecole Norm.\ Sup. (4) \textbf{24}, 635-704 (1991)


\bibitem{Wo} S.\ Wolpert, 
On the symplectic geometry of deformations of hyperbolic surfaces,
Ann.\ Math.\ \textbf{117}, 207-234  (1983)

\end{thebibliography}
\end{document}